\documentclass[12pt]{amsart}
\usepackage{amscd}
\usepackage{amssymb}
\usepackage[noadjust]{cite}
\usepackage{booktabs}
\usepackage{url}
\usepackage{hyphenat}
\usepackage{epsf}
\usepackage{mathtools}
\usepackage{ifpdf}
\usepackage{mathrsfs}
\usepackage{array}
\usepackage[T1]{fontenc}
\usepackage[utf8]{inputenc}
\ifpdf
  \usepackage[pdftex,lmargin=1in,rmargin=1in,tmargin=1in,bmargin=1in]{geometry}
  \usepackage[bookmarks=true, bookmarksopen=true,%
    bookmarksdepth=3,bookmarksopenlevel=2,%
    colorlinks=true,%
    linkcolor=blue,%
    citecolor=blue,%
    filecolor=blue,%
    menucolor=blue,%
    urlcolor=blue]{hyperref}
  \hypersetup{pdftitle={Relative Q-gradings from bordered Floer theory}}
  \hypersetup{pdfauthor={Robert Lipshitz, Peter S. Ozsváth, and Dylan
      P. Thurston}}
\else
  \usepackage[dvips,final]{graphicx}
  \usepackage[dvips,lmargin=1in,rmargin=1in,tmargin=1in,bmargin=1in]{geometry}
  \usepackage[draft]{hyperref}
\fi
\usepackage{tikz}
\usetikzlibrary{matrix,arrows}
\tikzset{cdlabel/.style={above,sloped,
    execute at begin node=$\scriptstyle,execute at end node=$}}
\tikzset{algarrow/.style={->, thick}}
\tikzset{blgarrow/.style={->, thick}}
\tikzset{clgarrow/.style={->, thick}}
\tikzset{tensoralgarrow/.style={double, double equal sign distance, -implies}}
\tikzset{tensorblgarrow/.style={double, double equal sign distance, -implies}}
\tikzset{tensorclgarrow/.style={double, double equal sign distance, -implies}}
\tikzset{modarrow/.style={->, dashed}}
\tikzset{Amodar/.style={->, dashed}}
\tikzset{Dmodar/.style={->, dashed}}


\setlength{\marginparwidth}{1.2in}

\newread\testin

\graphicspath{{draws/}{mpdraws/}}
\makeatletter
\def\input@path{{}{draws/}}
\makeatother

\makeatletter
\newcommand\mi@kern[1]{%
  \settowidth\@tempdima{$\mi@obj^{#1}$}
  \kern-\@tempdima
  #1
  \settowidth\@tempdima{$\mi@obj$}
  \kern\@tempdima
}

\newtoks\mi@toksp
\newtoks\mi@toksb
\DeclareRobustCommand{\manyindices}[5]{
  \def\mi@obj{#5}
  \mi@toksp\expandafter{\mi@kern{#2}}
  \mi@toksb\expandafter{\mi@kern{#1}}
  \@mathmeasure4\textstyle{#5_{#1}^{#2}}
  \@mathmeasure6\textstyle{#5_{#3}^{#4}}
  \dimen0-\wd6 \advance\dimen0\wd4
  \@mathmeasure8\textstyle{\hphantom{{}_{#1}^{#2}}#5^{\the\mi@toksp#4}_{\the\mi@toksb#3}}
  \hbox to \dimen0{}{\kern-\dimen0\box8}
}
\makeatother 

\usepackage{ifpdf}
\ifpdf
  
\else
  
\fi



\newcommand{\ZZ}{\mathbb Z}
\newcommand{\QQ}{\mathbb Q}

\newcommand{\FF}{\mathbb F}

\newcommand{\co}{\nobreak\mskip2mu\mathpunct{}\nonscript
  \mkern-\thinmuskip{:}\penalty300\mskip6muplus1mu\relax}


\newcommand{\bdy}{\partial}

\newcommand{\lbracket}{[}
\newcommand{\rbracket}{]}

\newcommand{\spinc}{\mathfrak s}

\DeclareMathOperator{\divis}{div}

\DeclareMathOperator{\spin}{spin}
\newcommand{\SpinC}{\spin^c}

\DeclareMathOperator{\gr}{gr}





\theoremstyle{plain}
\newtheorem{theorem}{Theorem}
\numberwithin{equation}{section}
\newtheorem{citethm}[equation]{Theorem}

\newtheorem{lemma}[equation]{Lemma}
\newtheorem{corollary}[equation]{Corollary}

\newtheorem{definition}[equation]{Definition}

\theoremstyle{definition}

\theoremstyle{remark}

\newtheorem{remark}[equation]{Remark}

\hyphenation{Thurs-ton}
\hyphenation{mo-no-poles}
\hyphenation{sur-ger-y}


\newcommand{\HF}{\mathit{HF}}
\newcommand{\HFa}{\widehat {\HF}}
\newcommand{\HFm}{{\HF}^-}
\newcommand{\HFp}{{\HF}^+}
\newcommand{\HFinf}{{\HF}^\infty}

\newcommand{\CFa}{\widehat {\mathit{CF}}}

\newcommand{\x}{\mathbf x}
\newcommand{\y}{\mathbf y}
\newcommand{\z}{\mathbf z}


\newcommand\HH{\mathit{HH}}

\newcommand\Hochschild\HH


\newcommand{\Ainf}{\mathcal A_\infty}

\newcommand{\Alg}{\mathcal{A}}

\newcommand{\alphas}{{\boldsymbol{\alpha}}}
\newcommand{\betas}{{\boldsymbol{\beta}}}



\newcommand{\CFD}{\mathit{CFD}}

\newcommand{\CFA}{\mathit{CFA}}

\newcommand{\CFDa}{\widehat{\CFD}}

\newcommand{\CFAa}{\widehat{\CFA}}

\newcommand{\cZ}{\mathcal{Z}}
\newcommand{\PtdMatchCirc}{\cZ}
\newcommand{\PMC}{\PtdMatchCirc}

\newcommand{\dg}{\textit{dg} }

\newcommand\DTP{\mathbin{\widetilde\otimes}}
\newcommand\DT{\boxtimes}
\newcommand\Gen{\mathfrak{S}}

\newcommand{\Field}{{\FF_2}}

\newcommand{\Heegaard}{\mathcal{H}}
\newcommand{\HD}{\Heegaard}

\newcommand{\bigGroup}{G'}
\newcommand{\smallGroup}{G}
\newcommand{\ABigGrSet}{G'_A}
\newcommand{\AQBigGrSet}{G'_{A,\QQ}}
\newcommand{\DBigGrSet}{G'_D}
\newcommand{\DQBigGrSet}{G'_{D,\QQ}}




\makeatletter
\newcommand\honestalg[3]{\bigl\lbracket
\begin{smallmatrix} #1\@ifempty{#3}{}{&#3} \\ #2 \end{smallmatrix}
\bigr\rbracket}

\makeatother


\newcommand{\lsub}[2]{{}_{#1}#2}
\newcommand{\lsup}[2]{{}^{#1}\mskip-.6\thinmuskip#2}




\begin{document}
\title[Relative $\QQ$-gradings]{Relative $\QQ$-gradings from bordered Floer theory}

\author[Lipshitz]{Robert Lipshitz}
\thanks{RL was supported by an NSF Grant DMS-0905796 and a Sloan Research
  Fellowship.}
\address{Department of Mathematics, Columbia University\\
  New York, NY 10027}
\email{lipshitz@math.columbia.edu}

\author[Ozsv\'ath]{Peter Ozsv\'ath}
\thanks{PSO was supported by NSF grant number DMS-0804121.}
\address {Department of Mathematics, Princeton University\\ New
  Jersey, 08544}
\email {petero@math.princeton.edu}

\author[Thurston]{Dylan~P.~Thurston}
\thanks {DPT was supported by NSF grant number DMS-1008049 and a Sloan Research Fellowship.}
\address{Department of Mathematics,
         UC Berkeley,
         970 Evans Hall,
         Berkeley, CA 94720}
\email{dpt@math.berkeley.edu}

\begin{abstract}
  In this paper we show how to recover the relative $\QQ$-grading in
  Heegaard Floer homology from the noncommutative grading on bordered
  Floer homology.
\end{abstract}

\maketitle

\tableofcontents

\section{Introduction}

Heegaard Floer homology, introduced by the second author and
Z. Szab{\'o}, is an invariant for a three-manifold equipped with a
$\SpinC$ structure~\cite{OS04:HolomorphicDisks}.  
Heegaard Floer homology is
defined as Lagrangian intersection Floer homology groups of certain
Lagrangians in a symmetric product of a Riemann surface, and as such is
most naturally are only relatively, cyclicly graded. Indeed, the
Heegaard Floer homology of a three-manifold equipped with the $\SpinC$
structure $\spinc$ is graded by the group $\ZZ/\divis(c_1(\spinc))$,
where $\divis(c_1(\spinc))$ denotes the divisibility of the first
Chern class of the $\SpinC$ structure~$\spinc$. In particular, if the
first Chern class of $\spinc$ is torsion, then the corresponding
Heegaard Floer homology is relatively $\ZZ$-graded.

With the help of the functorality properties of Heegaard Floer
homology, the relative $\ZZ$-grading on Heegaard Floer homology can be
lifted to an absolute $\QQ$-grading when the underlying $\SpinC$
structure is torsion~\cite{OS06:HolDiskFour}. (Compare
Fr{\o}yshov~\cite{Froyshov04}.) This absolute $\QQ$-grading contains
subtle
topological information; 
for a beautiful recent application, see~\cite{Greene:MutationLatticesGraphs}.
Although, by work of Sarkar-Wang~\cite{SarkarWang07:ComputingHFhat} and
Sarkar~\cite{Sarkar06:IndexTriangles}, the
absolute $\QQ$-grading is algorithmically computable, no simple
formula is known, and it remains somewhat mysterious.

By contrast, the relative $\QQ$-grading induced by the absolute
$\QQ$-grading is much simpler. In this paper, we show how to use
bordered Floer homology to compute this relative $\QQ$-grading between
different torsion $\SpinC$ structures, by
decomposing a $3$-manifold along a connected surface; it turns out
that the non-commutative grading on bordered Floer homology contains
the necessary information. (Another way of computing the relative
$\QQ$-grading, using covering spaces, was given by D.~Lee and the
first author~\cite{LipshitzLee08:RelGrad}.)

Finally, note that Heegaard Floer homology has several variants,
$\HFa$, $\HFm$, $\HFinf$, and $\HFp$.  Although we focus on the
relative $\QQ$-grading on $\HFa$
(as that is the version with a corresponding bordered theory),
this determines the relative $\QQ$-grading on $\HF^+$,
$\HF^-$ and $\HF^\infty$, via the exact triangles
\[
\cdots\longrightarrow \HFa(Y)\longrightarrow \HF^+(Y)\stackrel{\cdot U}{\longrightarrow}
\HF^+(Y)\stackrel{[1]}{\longrightarrow} \cdots
\]
and
\[
\cdots\longrightarrow \HF^-(Y)\longrightarrow
\HF^\infty(Y)\longrightarrow \HF^+(Y)\stackrel{[1]}{\longrightarrow}
\cdots.
\]

\vspace{\baselineskip} 
\noindent
\textbf{Acknowledgements.} This paper was
started while we were visiting the Mathematical Sciences Research
Institute (MSRI) and finished while the first two authors were visiting the
Simons Center for Geometry and Physics and the third author was
visiting U.\ C.\ Berkeley and MSRI. We thank all three institutions for their hospitality.

\section{Background}
\subsection{The relative $\QQ$-grading}\label{sec:back:Q-grad}
The absolute $\QQ$-grading on $\HFa(Y,\spinc)$ is defined as
follows. Choose a $\SpinC$ nullcobordism $(W^4,\mathfrak{t})$ of
$(Y,\spinc)$. Associated to the cobordism $W$ is a map
$\hat{F}_{W,\spinc}\co \HFa(S^3,\spinc_0)\to \HFa(Y,\spinc)$. The
absolute grading on $\HFa(Y,\spinc)$ is characterized by the property
that the generator of $\HFa(S^3)\cong\ZZ$ lies in degree $0$ and the
map $F$ has degree
\[
\frac{c_1(\mathfrak{t})^2-2\chi(W)-3\sigma(W)}{4}.
\]
(Actually, since $\hat{F}_{W,\spinc}$ might be trivial on homology, it
is more accurate to say the grading is characterized by the property
that Maslov index $0$ triangles in the definition of
$\hat{F}_{W,\spinc}$ have this degree.)
See~\cite[Section 7]{OS06:HolDiskFour}.

The paper~\cite{LipshitzLee08:RelGrad} shows that the relative
$\QQ$-grading can be reformulated as follows. Suppose that $\x$ and
$\y$ are generators for $\CFa(Y)$ (computed via some pointed Heegaard diagram
$\HD=(\Sigma,\alphas,\betas,z)$) so that $c_1(\spinc(\x))$ and $c_1(\spinc(\y))$ are
torsion. Then there is a finite-order covering space $p\co
\widetilde{Y}\to Y$ so that
$p^*\spinc(\x)=p^*\spinc(\y)$~\cite[Corollary
2.10]{LipshitzLee08:RelGrad}. The Heegaard diagram $\HD$
for $Y$ lifts to a (multi-pointed) Heegaard diagram $\widetilde{\HD}$
for $\widetilde{Y}$. The generators $\x$ and $\y$ have preimages
$p^{-1}(\x)$ and $p^{-1}(\y)$ in $\widetilde{\HD}$ so that
$\spinc(p^{-1}(\x))=p^*\spinc(\x)$ and
$\spinc(p^{-1}(\y))=p^*\spinc(\y)$. Thus, $p^{-1}(\x)$ and
$p^{-1}(\y)$ have a well-defined $\ZZ$-grading difference. Then
\[
\gr_\QQ(\y)-\gr_\QQ(\x)=\frac{1}{n}\gr_{\ZZ}(p^*(\x),p^*(\y)),
\]
where $n$ is the order of the cover $\widetilde{Y}\to Y$.

More concretely, even though $\x$ and $\y$ can not necessarily be
connected by a domain in $\pi_2(\x,\y)$, if we allow rational
multiples of the regions in $\Sigma$ then they can be connected. That
is, let $\pi_2^\QQ(\x,\y)$ denote the set of rational linear
combinations $B$ of components of $\Sigma\setminus(\alphas\cup\betas)$
connecting $\x$ to $\y$ (i.e., $\bdy(\bdy B\cap\alphas)=-\bdy(\bdy
B\cap\betas)=\y-\x$).  If $c_1(\spinc(\x))-c_1(\spinc(\y))$ is torsion
then $\pi_2^\QQ(\x,\y)$ is nonempty, and
\[
\gr_\QQ(\y)-\gr_\QQ(\x)=e(B)+n_\x(B)+n_\y(B)
\]
for any $B\in\pi_2^\QQ(\x,\y)$. Here, $e(B)$ denotes the Euler measure
of $B$ and $n_\x(B)$ and $n_\y(B)$ denote the point measure of $B$ at
the points $\x$ and $\y$; compare \cite[Section
4.2]{Lipshitz06:CylindricalHF}. These quantities on the right are
thought of as the natural $\QQ$-linear extensions of the usual
integer-valued analogues.  This agrees with the formulas in
\cite[Section 2.3]{LipshitzLee08:RelGrad}.

\subsection{The structure of bordered Floer theory}\label{sec:back:borderedFloer}
Bordered Floer theory assigns to a surface $F=F(\PMC)$ represented by
a \emph{pointed matched circle} $\PMC$ (Figure~\ref{fig:pmc-to-surf})
a \dg algebra $\Alg=\Alg(\PMC)$~\cite[Section 3]{LOT1}. To a
$3$-manifold $Y$ with boundary parameterized by $F(\PMC)$ it
associates invariants $\CFAa(Y)_{\Alg(\PMC)}$, a right $\Ainf$-module
over $\Alg(\PMC)$, and $\lsup{\Alg(-\PMC)}\CFDa(Y)$, a left,
projective \dg module over $\Alg(-\PMC)$~\cite[Sections 6,
7]{LOT1}. Each of $\CFAa(Y)$ and $\CFDa(Y)$ are well-defined up to
homotopy equivalence. These modules are related to the invariants of a closed
$3$-manifold by a \emph{pairing theorem}:
\begin{citethm}\label{thm:pairing}\cite[Theorem 1.3]{LOT1}
  If $Y_1$ and $Y_2$ are $3$-manifolds with boundaries parameterized
  by $F(\PMC)$ and $-F(\PMC)$ respectively then
  \[
  \CFa(Y_1\cup_FY_2)\simeq \CFAa(Y_1)\DTP_{\Alg(\PMC)}\CFDa(Y_2).
  \]
\end{citethm}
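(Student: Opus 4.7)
The plan is to prove the pairing theorem via a neck-stretching argument on a Heegaard diagram obtained by gluing. First I would choose a bordered Heegaard diagram $\HD_1 = (\Sigma_1, \alphas_1, \betas_1)$ for $Y_1$ and $\HD_2 = (\Sigma_2, \alphas_2, \betas_2)$ for $Y_2$, each compatible with the pointed matched circle $\PMC$ (respectively $-\PMC$), and form the closed pointed Heegaard diagram $\HD = \HD_1 \cup_{\bdy} \HD_2$ for $Y_1 \cup_F Y_2$. The underlying chain complex $\CFa(\HD)$ has generators in bijection with pairs $(\x_1, \x_2)$ where $\x_i$ is a generator of $\CFAa(Y_1)$, resp.\ $\CFDa(Y_2)$, whose idempotents match; this is exactly the underlying $\Field$-vector space of $\CFAa(Y_1) \DTP \CFDa(Y_2)$.

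Next I would adopt the cylindrical formulation of Heegaard Floer homology (cf.\ \cite{Lipshitz06:CylindricalHF}) and stretch the almost-complex structure along the separating circle $Z \times [0,1] \subset \Sigma \times [0,1]$. The core analytic step is an SFT-style compactness result: as the neck length goes to infinity, any sequence of embedded holomorphic curves in $\Sigma \times [0,1] \times \RR$ limits to a matched pair consisting of a holomorphic curve $u_1$ in $\Sigma_1 \times [0,1] \times \RR$ with punctures mapping to east infinity, asymptotic to a sequence of Reeb chords $(\rho_1, \ldots, \rho_n)$, together with a holomorphic curve $u_2$ in $\Sigma_2 \times [0,1] \times \RR$ with punctures at west infinity, asymptotic to $(\rho_n, \ldots, \rho_1)$ in the reversed order dictated by the orientation reversal of the boundary. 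Counts of $u_1$'s with asymptotics $(\rho_1, \ldots, \rho_n)$ are precisely the $\Ainf$-operations defining $\CFAa(Y_1)$ (with the algebra input $\rho_1 \otimes \cdots \otimes \rho_n$), while counts of $u_2$'s with asymptotics at the corresponding ideal elements are encoded in the differential of the type $D$ module $\CFDa(Y_2)$ via its structure map $\delta^1$ iterated to $\delta$.

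Combining these two enumerations, the contribution of each matched pair $(u_1, u_2)$ to the differential on $\CFa(\HD)$ after stretching is exactly one term in the box tensor product differential
\[
\bdy^{\DTP}(\x_1 \otimes \x_2) = \sum_{n \geq 0} (m_{n+1}^{\CFAa} \otimes \id)(\x_1 \otimes \delta^n(\x_2)),
\]
where the finiteness of the sum for each input is guaranteed by boundedness of $\CFDa(Y_2)$ (or its projectivity, which allows a bounded homotopy replacement). To upgrade this term-by-term matching into a chain isomorphism, I need the companion gluing result: every matched rigid pair $(u_1, u_2)$ with matching Reeb asymptotics can be glued, uniquely up to higher-order corrections, to a genuine holomorphic curve in the stretched $\HD$. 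Finally, homotopy invariance of $\CFa$ (under changes of almost-complex structure) lets me pass from the stretched count to the usual definition, yielding the claimed homotopy equivalence.

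The main obstacle is the gluing step together with the compactness/transversality package needed to justify it: one has to check that no codimension-one degenerations other than the matched A-side/D-side splittings appear in the limit, handle the presence of boundary degenerations carefully, and ensure that the combinatorics of Reeb chord orderings at east versus west infinity line up exactly with the algebraic conventions used in defining $\DTP$. The transversality issues and the need for a coherent system of perturbations across all moduli spaces involved are where the hardest analytic work lies, while the algebraic identification with the box tensor product is essentially bookkeeping once the geometric picture is established.
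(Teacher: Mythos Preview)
The paper does not prove this statement at all: it is a \texttt{citethm}, quoted from~\cite[Theorem~1.3]{LOT1} as background, with no proof given here. So there is no ``paper's own proof'' to compare against.

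That said, your sketch is a reasonable outline of the strategy actually used in~\cite{LOT1}. A few remarks for accuracy. First, the proof in~\cite{LOT1} is not literally a neck-stretching argument in the SFT sense; rather it is a \emph{time dilation} argument, in which one rescales the $\RR$-coordinate on the type~$D$ side relative to the type~$A$ side and studies the limit as the dilation parameter goes to infinity. The effect is similar---curves on the two sides decouple and communicate only through Reeb-chord asymptotics---but the analytic setup differs from a standard neck-stretch on $\Sigma\times[0,1]\times\RR$. Second, the finiteness of the box tensor product differential requires that at least one of the two modules be bounded (or admit a bounded replacement); this is handled in~\cite{LOT1} via admissibility hypotheses on the Heegaard diagrams, not by projectivity of $\CFDa$. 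Third, there is also a combinatorial proof in~\cite{LOT1} using nice diagrams, which avoids the hard analysis entirely at the cost of restricting to a special class of Heegaard diagrams.

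Your identification of the main obstacle---the compactness/gluing package and the exclusion of unwanted degenerations---is accurate; that is indeed where the bulk of the work in~\cite{LOT1} lies.
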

Here, $\DTP_{\Alg(\PMC)}$ denotes the $\Ainf$-tensor product over
$\Alg(\PMC)$. There is a particularly convenient model $\DT$ for the
$\Ainf$-tensor product so that $\CFa(Y_1\cup_FY_2)$ is actually
isomorphic as an $\Field$-vector space to $\CFAa(Y_1)\DT\CFDa(Y_2)$
(for corresponding choices of auxiliary data, as discussed below).

\begin{figure}
  \centering
  \includegraphics{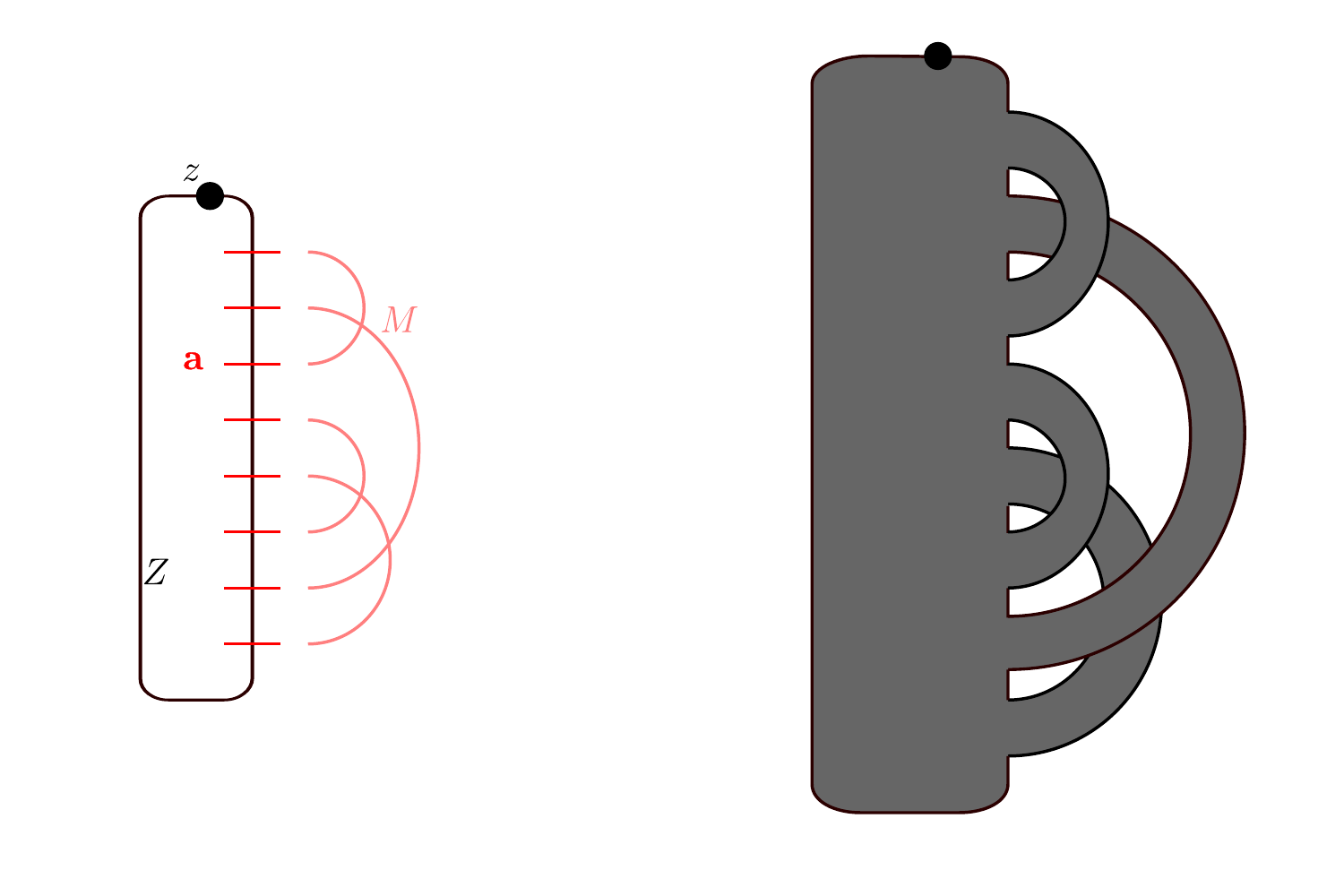}
  \caption{\textbf{The surface represented by a pointed matched
      circle.} Left: a pointed matched circle $\PMC$, consisting of a
    circle $Z$, $4k$ points $\mathbf{a}$ (the case $k=2$ is shown)
    matched in pairs via a matching $M$, and with a basepoint
    $z$. Right: the surface with boundary $F^\circ(\PMC)$ represented
    by $\PMC$. The surface $F(\PMC)$ represented by $\PMC$ is obtained
    by gluing a disk to $F^\circ(\PMC)$. As such, it contains a
    distinguished disk and a basepoint on the boundary of that disk.}
  \label{fig:pmc-to-surf}
\end{figure}

The isomorphism in Theorem~\ref{thm:pairing} is an isomorphism of
relatively graded groups, in an appropriate sense. This will be
discussed further in Section~\ref{sec:BorderedGr}.

For the purposes of this paper, we will use the following basic facts
about $\Alg(\PMC)$, $\CFAa(Y)$ and $\CFDa(Y)$:
\begin{itemize}
\item The invariants $\CFAa(Y)$ and $\CFDa(Y)$ are defined in terms of
  a bordered Heegaard diagram $\HD=(\Sigma,\alphas,\betas,z)$ for
  $Y$. Here, $\Sigma$ is a compact, orientable surface of some genus
  $g$ with one boundary component; $\alphas$ consists of
  pairwise-disjoint embedded arcs $\alphas^a$ and circles $\alphas^c$
  in $\Sigma$, with $\bdy\alphas^a\subset \bdy\Sigma$, while $\betas$
  consists of embedded circles only; and $z$ is a basepoint in
  $\bdy\Sigma$, not lying on any $\alpha$-arc. See
  Figure~\ref{fig:bordered-HD} for an example, and~\cite[Section
  4]{LOT1} for more details.
\item If the bordered Heegaard diagrams $\HD_1$ and $H_2$ represent
  $Y_1$ and $Y_2$, respectively, and $\bdy Y_1=F(\PMC)=-\bdy Y_2$,
  then $\HD=\HD_1\cup_\bdy \HD_2$ represents $Y=Y_1\cup_\bdy Y_2$.
\item Given a genus $g$ bordered Heegaard diagram $\HD$ representing
  $Y$, the modules $\CFDa(Y)$ and $\CFAa(Y)$ are generated by all sets
  $\x=\{x_1,\dots,x_g\}$ of $g$ points in $\alphas\cap\betas$ so that
  exactly one $x_i$ lies on each $\alpha$- or $\beta$-circle and at
  most one $x_i$ lies on each $\alpha$-arc. (Again, see
  Figure~\ref{fig:bordered-HD}.) Let $\Gen(\HD)$ denote the set of
  generators $\x$ in $\HD$.
\item Given generators $\x$ and $\y$ in $\Gen(\HD)$, a \emph{domain
    connecting $\x$ to $\y$} is a linear combination $B$ of components
  of $\Sigma\setminus(\alphas\cup\betas)$ so that $\bdy((\bdy
  B)\cap\betas)=\x-\y$ and $\bdy((\bdy
  B)\cap(\alphas\cup\bdy\Sigma))=\y-\x$. (See
  Figure~\ref{fig:bordered-HD}.) Let $\pi_2(\x,\y)$ denote the set of
  domains connecting $\x$ to $\y$. For $B\in\pi_2(\x,\y)$, let
  $\bdy^\alpha B=(\bdy B)\cap \alphas$, $\bdy^\beta B=(\bdy
  B)\cap\betas$ and $\bdy^\bdy B=(\bdy B)\cap (\bdy \Sigma)$.
\item Given a bordered Heegaard diagram $\HD$ for $Y$, associated to
  each generator $\x\in\Gen(\HD)$ is a $\SpinC$-structure
  $\spinc(\x)$ on~$Y$. The modules $\CFDa(Y)$ and $\CFAa(Y)$ decompose
  according to these $\SpinC$-structures,
  $\CFDa(Y)=\bigoplus_{\spinc\in\SpinC(Y)}\CFDa(Y;\spinc)$ and
  $\CFAa(Y)=\bigoplus_{\spinc\in\SpinC(Y)}\CFAa(Y;\spinc)$. Let
  $\Gen(\HD,\spinc)=\{\x\in\Gen(\HD)\mid \spinc(\x)=\spinc\}$ denote
  the set of generators for $\CFAa(Y,\spinc)$ and $\CFDa(Y,\spinc)$.
\item Given bordered Heegaard diagrams $\HD_1$ and $\HD_2$ with $\bdy
  \HD_1=-\bdy\HD_2$, let $\HD=\HD_1\cup_\bdy \HD_2$. 
  There is an obvious
  embedding $\Gen(\HD)\to \Gen(\HD_1)\times\Gen(\HD_2)$ of the set of
  generators $\Gen(\HD)$ of $\CFa(\HD)$. The image of
  this embedding is the set of pairs
  $(\x_1,\x_2)\in\Gen(\HD_1)\times\Gen(\HD_2)$ so that $\x_1$ and
  $\x_2$ occupy complementary $\alpha$-arcs. It turns out that these
  are exactly the generators of $\CFAa(\HD_1)\DT\CFDa(\HD_2)$.
\end{itemize}

\begin{figure}
  \centering
  \includegraphics{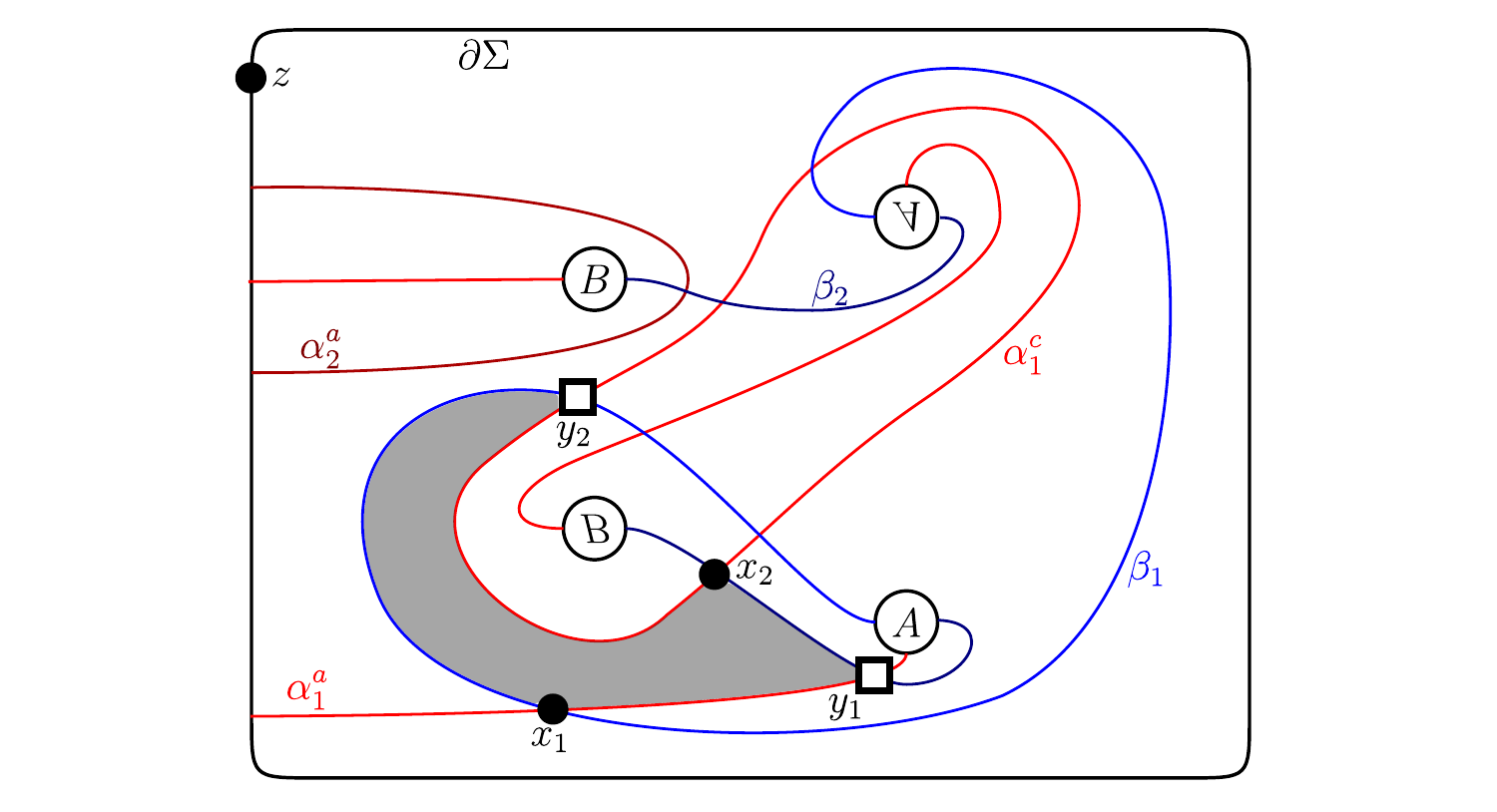}
  \caption{\textbf{A bordered Heegaard diagram.} The circles labeled
    $A$ and $B$ are connected by tubes. This bordered Heegaard diagram
    represents the trefoil complement (with a particular
    parameterization of its boundary). Two generators in $\Gen(\HD)$
    are marked, one by solid disks and the other by empty squares. A
    domain $B\in\pi_2(\x,\y)$ connecting $\x$ and $\y$ is also shown,
    shaded.}
  \label{fig:bordered-HD}
\end{figure}

There is an extension of bordered Floer theory to manifolds with two
boundary components, which are assigned various types of
bimodules~\cite{LOT2}. The generalizations of the results of this
paper to the bimodule case are straightforward, and we shall not
discuss them.

\subsection{The (non-commutative) grading in bordered Floer}\label{sec:BorderedGr}
As noted in the introduction, the grading on bordered Floer homology
is non-commutative.
\begin{definition}
  Let $G$ be a group and $\lambda$ a central element in $G$. If $\Alg$
  is a differential algebra, a \emph{grading of $\Alg$ by $G$}
  consists of a
  decomposition (as abelian groups) $\Alg=\bigoplus_{g\in G}\Alg_g$ of
  $\Alg$ into homogeneous parts so that for any homogeneous algebra
  elements $a$ and $b$,
  \begin{align}
      \gr(ab)&=\gr(a)\gr(b)&\text{if }&ab\neq 0\\
      \gr(\bdy a)&=\lambda^{-1}\gr(a)& \text{if }&\bdy a\neq 0.
  \end{align}

  Let $S$ be a left $G$-set. If $\lsub{\Alg}M$ is a left differential
  $\Alg$-module, a \emph{grading of $\lsub{\Alg}M$ by $S$} consists of a
  decomposition (as abelian groups) $M=\bigoplus_{s\in S}M_s$ of $M$
  into homogeneous parts such that for homogeneous elements $x\in M$
  and $a\in \Alg$,
  \begin{align}
    \label{eq:abstract-mod-gr}
    \gr(ax)&=\gr(a)\gr(x) & \text{if }&ax\neq 0\\
    \gr(\bdy x)&=\lambda^{-1}\gr(x) & \text{if }&\bdy x\neq 0.\label{eq:abstract-mod-gr-2}
  \end{align}
  More generally, if $S$ is an $\Ainf$-module over $\Alg$,
  Equations~(\ref{eq:abstract-mod-gr})
  and~(\ref{eq:abstract-mod-gr-2}) become
  \[
  \gr(m_{k+1}(a_1,\dots,a_k,x))=\lambda^{k-1}\gr(a_1)\cdots\gr(a_k)\gr(x),
  \]
  if $m_{k+1}(a_1,\dots,a_k,x)\neq 0$.

  Gradings on right modules by right $G$-sets are defined similarly.
\end{definition}

In the case of bordered Floer homology, for each pointed matched
circle $\PMC=(Z,\mathbf{a},M,z)$ there is a group $G'(\PMC)$ so that
$\Alg(\PMC)$ is graded by $G'(\PMC)$. The group $G'(\PMC)$ is a
$\ZZ$-central extension of $H_1(Z,\mathbf{a})$. To specify it, given a
point $p\in \mathbf{a}$ and homology class $c\in H_1(Z,\mathbf{a})$,
define $\mu(a,p)$ to be the average local multiplicity of $a$ near
$p$. Extend $\mu$ to a bilinear map $H_1(Z,\mathbf{a})\otimes
H_0(\mathbf{a})\to \frac{1}{2}\ZZ$. Then $G'$ is defined by the
commutation relation
\[
g\cdot h = \lambda^{2\mu([h],\bdy [g])}h\cdot g,
\]
where $[\cdot]\co G'(\PMC)\to H_1(Z,\mathbf{a})$ is the canonical
projection and $\lambda$ is a generator of the central $\ZZ$.

Explicitly, we can write elements of $G'(\PMC)$ as pairs $(m,c)$ where
$m\in\QQ$ and $c\in H_1(Z,\mathbf{a})$, with multiplication given by
\begin{equation}\label{eq:GpMult}
(m_1,\alpha_1)\cdot(m_2,\alpha_2)
=(m_1+m_2+m(\alpha_2,\bdy\alpha_1),\alpha_1+\alpha_2).
\end{equation}
The group $G'(\PMC)$ is generated by the elements $\lambda=(1;0)$ and
$(-\frac{1}{2},[i,i+1])$.

The grading on $\CFAa(\HD,\spinc)$ is given as follows.  Given a
Heegaard diagram $\HD$ with $\bdy\HD=\PMC$ and a $\SpinC$-structure
$\spinc$ on $Y=Y(\HD)$, fix a base generator
$\x_0\in\Gen(\HD,\spinc)$. For a domain $B\in\pi_2(\x,\y)$ define
\begin{equation}
  \label{eq:g-of-B}
  g'(B)=(-e(B)-n_\x(B)-n_\y(B),\bdy^\bdy B)\in\bigGroup(\PMC).
\end{equation}
If $B_1\in\pi_2(\x,\y)$ and $B_2\in\pi_2(\y,\z)$, let
$B_1*B_2\in\pi_2(\x,\z)$ denote the concatenation of $B_1$ and $B_2$.
Then $g'(B_1*B_2)=g'(B_1)\cdot g'(B_2)$~\cite[Lemma 6.15]{LOT1}.  In
particular, $P'(\x_0)=\{g'(B)\mid B\in\pi_2(\x_0,\x_0)\}$ is a subgroup
of $\bigGroup(\PMC)$. The module $\CFAa(\HD,\spinc)$ is graded by the
right $\bigGroup(\PMC)$-set 
$\ABigGrSet(\HD,\spinc)\coloneqq P'(\x_0) \backslash \bigGroup(4k)$.
(This construction depends on $\x_0$, but different choices of $\x_0$
give canonically isomorphic grading sets; see \cite[Section 10.3]{LOT1}.)
The grading of an element $\x\in\Gen(\HD,\spinc)$ is given by 
$g(B)$ for any $B\in\pi_2(\x_0,\x)$, thought of as an element of  the
coset space
$\ABigGrSet(\HD,\spinc)\coloneqq P'(\x_0) \backslash \bigGroup(4k)$.

The invariant $\CFDa(\HD)$ is a module over $\Alg(-\bdy\HD)$ rather
than $\bdy\HD$. So, in grading $\CFDa(\HD)$ we will use the anti-homomorphism $R\co
\bigGroup(-\PMC)\to \bigGroup(\PMC)$ given by
$R(j,\alpha)=(j,r_*(\alpha))$ where $r\co {-Z}\to Z$ is the
(orientation-reversing) identity map. 
The grading on $\CFDa(\HD,\spinc)$ is then defined similarly to the
grading on $\CFAa(\HD,\spinc)$, except that
the left module $\CFDa(\HD,\spinc)$ is graded by the left $\bigGroup$-set
$\DBigGrSet(\HD,\spinc)\coloneqq \bigGroup(4k)/R(P'(\x_0))$, and
the grading of an element $x\in\Gen(\HD,\spinc)$ is given by 
(the equivalence class of) $R(g(B))$ for any $B\in\pi_2(\x_0,\x)$.

The tensor product $\CFAa(\HD_1,\spinc_1)\DT\CFDa(\HD_2,\spinc_2)$ is
graded by the amalgamated product of the grading sets
$\ABigGrSet(\HD_1)\times_{\bigGroup}\DBigGrSet(\HD_2)$;
the grading of
$\x_1\otimes\x_2$ is
$\gr'(\x_1\otimes\x_2)=(\gr'(\x_1),\gr'(\x_2))$.
(In fact, certain results are cleaner if one works instead with a certain subset of this amalgamated
product that contains the gradings of all tensor products of
generators; compare Theorem~\ref{thm:pairing-graded}, below,
and \cite[Theorem~10.43]{LOT1}.)
Note that since $\lambda$
is central in $\bigGroup$, the set
$\ABigGrSet(\HD_1)\times_{\bigGroup}\DBigGrSet(\HD_2)$
retains an action by $\lambda$, which we will think of as a $\ZZ$-action.

A graded version of the pairing theorem states:
\begin{citethm}\label{thm:pairing-graded}\cite[Theorem 9.33]{LOT1}
  If $Y_1$ and $Y_2$ are $3$-manifolds with boundaries parameterized by $F$ and $-F$ respectively then there is a map
  \[
  \Phi\co \CFAa(Y_1)\DT\CFDa(Y_2)\to \CFa(Y_1\cup_FY_2)
  \]
  such that:
  \begin{enumerate}
  \item $\Phi$ is a homotopy equivalence.
  \item Given generators $\x_1\otimes\x_2$ and $\y_1\otimes\y_2$ for
    $\CFAa(Y_1)\DT\CFDa(Y_2)$,
    $\spinc(\Phi(\x_1\otimes\x_2))=\spinc(\Phi(\y_1\otimes\y_2))$ if
    and only if:
    \begin{itemize}
    \item $\spinc(\x_1)=\spinc(\y_1)\eqqcolon \spinc_1$,
    \item $\spinc(\x_2)=\spinc(\y_2)\eqqcolon \spinc_2$, and
    \item the generators $g'(\x_1)\times_{\bigGroup}g'(\x_2)$ and
      $g'(\y_1)\times_{\bigGroup}g'(\y_2)$ lie in the same $\ZZ$-orbit
      of
      $\ABigGrSet(\HD_1,\spinc_1)\times_{\bigGroup}\DBigGrSet(\HD_2,\spinc_2)$.
    \end{itemize}
  \item If $\spinc(\x_1\otimes\x_2)=\spinc(\y_1\otimes\y_2)$ then
    \[    
    g'(\y_1)\times_{\bigGroup}g'(\y_2)=\lambda^{\gr(\Phi(\x_1\otimes\x_2),\Phi(\y_1\otimes\y_2))}g'(\x_1)\times_{\bigGroup}g'(\x_2).
    \]
  \end{enumerate}
\end{citethm}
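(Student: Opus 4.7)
The plan is to reduce all three items to two ingredients: the additive structure of the quantity $g'(B) = (-e(B) - n_\x(B) - n_\y(B), \bdy^\bdy B)$ under gluing of bordered domains into closed ones, and the multiplicativity $g'(B * B') = g'(B) g'(B')$ from~\cite[Lemma 6.15]{LOT1}. The map $\Phi$ and item~(1) come directly from Theorem~\ref{thm:pairing}: the model $\DT$ identifies the underlying $\Field$-vector space of $\CFAa(Y_1) \DT \CFDa(Y_2)$ with that of $\CFa(\HD_1 \cup_\bdy \HD_2)$, sending $\x_1 \otimes \x_2$ to the generator $\x_1 \cup \x_2$, so $\Phi$ is this identification together with the induced differential, already known to be a chain homotopy equivalence.

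For item~(2), recall that on a closed Heegaard diagram two generators share a $\spinc$-structure exactly when connected by a domain. Every $B \in \pi_2(\x_1 \cup \x_2, \y_1 \cup \y_2)$ on $\HD = \HD_1 \cup_\bdy \HD_2$ restricts uniquely to bordered domains $B_i \in \pi_2(\x_i, \y_i)$ on $\HD_i$, and the condition that $B$ have no boundary along $Z = \bdy\Sigma_1 = -\bdy\Sigma_2$ becomes the cancellation $\bdy^\bdy B_1 + R_*(\bdy^\bdy B_2) = 0$ in $H_1(Z, \mathbf{a})$. Existence of some $B_i$ is equivalent to $\spinc(\x_i) = \spinc(\y_i)$. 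Fixing base domains $C_i \in \pi_2(\x_{i,0}, \x_i)$ and $D_i \in \pi_2(\x_{i,0}, \y_i)$, the ``difference'' $g'(C_i)^{-1} g'(D_i)$ projects to $\bdy^\bdy D_i - \bdy^\bdy C_i = \bdy^\bdy B_i$ in $H_1(Z, \mathbf{a})$. Modulo periodic domains (which shift $g'$ only by central powers of $\lambda$), the cancellation condition is then equivalent to saying that the amalgamated classes $g'(\x_1) \times_{\bigGroup} g'(\x_2)$ and $g'(\y_1) \times_{\bigGroup} g'(\y_2)$ lie in the same $\ZZ = \langle\lambda\rangle$-orbit.

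For item~(3), assume the $\spinc$-structures match and fix glueable lifts $B_i \in \pi_2(\x_i, \y_i)$, so that $B := B_1 \cup B_2 \in \pi_2(\x_1 \cup \x_2, \y_1 \cup \y_2)$. By the formula of Section~\ref{sec:back:Q-grad},
\[
  \gr\bigl(\Phi(\x_1 \otimes \x_2), \Phi(\y_1 \otimes \y_2)\bigr) = -\bigl(e(B) + n_{\x_1 \cup \x_2}(B) + n_{\y_1 \cup \y_2}(B)\bigr),
\]
up to the standard sign convention. The Euler measure and point measures are additive under gluing along $Z$ (with boundary corrections matched precisely by the cross-term in Equation~(\ref{eq:GpMult})), so this quantity equals the first coordinate of the product $g'(B_1) \cdot R(g'(B_2))$ in $\bigGroup(\PMC)$. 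The $H_1$-parts cancel by the gluing condition, so the product is a pure power of $\lambda$ with the desired exponent, and multiplying through by the base-to-$\x$ domain classes in the amalgamated product yields exactly the identity in~(3).

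The main obstacle is the careful bookkeeping around the anti-homomorphism $R$, the asymmetry between the left and right factors of the amalgamated product, and in particular verifying that the non-commutative cross-term $\mu(\alpha_2, \bdy\alpha_1)$ of Equation~(\ref{eq:GpMult}) (with $\alpha_i = \bdy^\bdy B_i$, suitably identified) exactly accounts for the boundary correction to additivity of $e(B) + n(B)$ under gluing along $Z$. This is the crucial, non-formal compatibility between the definition of $\bigGroup$ and the geometry of bordered Heegaard diagrams, and essentially motivates the specific form of the central extension.
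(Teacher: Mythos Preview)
This theorem is not proved in the paper: it is stated as background and attributed to~\cite[Theorem~9.33]{LOT1}, with no argument given here. So there is no ``paper's own proof'' to compare your proposal against.

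That said, your sketch follows the standard line of the proof in~\cite{LOT1}: identify $\Phi$ via the $\DT$ model, reduce (2) and (3) to the behavior of domains under cutting and gluing along $Z$, and use the multiplicativity of $g'$ together with the fact that the cross-term in the group law~(\ref{eq:GpMult}) precisely records the failure of $e+n_\x+n_\y$ to be naively additive under gluing. One phrasing to tighten: your parenthetical ``periodic domains (which shift $g'$ only by central powers of $\lambda$)'' is false as stated for periodic domains on a \emph{bordered} piece $\HD_i$, whose $g'$ can have nontrivial $H_1(Z,\mathbf a)$-component; that is exactly why the grading sets are quotients by $P'(\x_{i,0})$ rather than by $\langle\lambda\rangle$. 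The correct statement is that periodic domains on the \emph{closed} diagram $\HD$ have $g'$ a power of $\lambda$, and that in the amalgamated product the two bordered periodic-domain subgroups together with the diagonal $\bigGroup$-action account for all the ambiguity, leaving only the residual $\langle\lambda\rangle$-action. With that correction, the ``only if'' half of~(2) still needs one more sentence: from representatives $B_i$ whose $\bdy^\bdy$-parts match after adjusting by suitable periodic domains, you must actually glue to a closed domain, which amounts to the (easy) observation that $\pi_2(\x,\y)$ on $\HD$ surjects onto the fiber product of the $\pi_2(\x_i,\y_i)$ over $H_1(Z,\mathbf a)$.
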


For this paper, we will use a slightly larger grading group, and
corresponding grading sets. Given a pointed matched circle
$\PMC=(Z,\mathbf{a},M,z)$, let $\bigGroup_\QQ(\PMC)$ denote the
$\QQ$-central extension of $H_1(Z,\mathbf{a};\QQ)$ with multiplication
given by
\[
(m_1,\alpha_1)\cdot(m_2,\alpha_2)
=(m_1+m_2+m(\alpha_2,\bdy\alpha_1),\alpha_1+\alpha_2),
\]
i.e., the same formula as Equation~(\ref{eq:GpMult}).

There is an obvious inclusion $\bigGroup\to \bigGroup_\QQ$, so the
$\bigGroup$-grading on $\Alg(\PMC)$ induces a $\bigGroup_\QQ$-grading
on $\Alg(\PMC)$. Also, note that for $g\in\bigGroup_\QQ$ and $q\in\QQ$
there is a well-defined element $q\cdot g\in\bigGroup_\QQ$ obtained by
multiplying all of the coefficients in $g$ by $q$.

If $\HD$ is a Heegaard diagram with $\bdy\HD=\PMC$ (respectively
$\bdy\HD=-\PMC$), we can define a $\bigGroup_\QQ$-grading on
$\CFAa(\HD)$ (respectively $\CFDa(\HD)$) using
Formula~(\ref{eq:g-of-B}). Given $\x\in\Gen(\HD)$ let $P'_\QQ(\x)$ denote
the subgroup of $\bigGroup_\QQ$ generated by $\{q\cdot g'(B)\mid
B\in\pi_2(\x,\x),\ q\in\QQ\}$. Fix a base generator
$\x_0\in\Gen(\HD,\spinc)$.  For any $\x\in\Gen(\HD,\spinc)$ choose a
$B\in\pi_2(\x_0,\x)$ and define $\gr'_\QQ(\x)=g'(B)$ (respectively 
$\gr'_\QQ(\x)=R(g'(B))$), viewed as an
element of
$\AQBigGrSet(\HD,\spinc)\coloneqq P'_\QQ(\x_0) \backslash \bigGroup_\QQ(4k)$
(respectively $\DQBigGrSet(\HD,\spinc)\coloneqq
\bigGroup_\QQ(4k)/R(P'_\QQ(\x_0))$).


There is also a refined grading on the algebra, by a group $G$ which
is a $\ZZ$-central extension of $H_1(F(\PMC))$, and corresponding
gradings on the modules; see~\cite[Section 3.3]{LOT1} or~\cite[Section
3.1.1]{LOT2}. Generally we will work with the larger grading group in
this paper, but see also Remark~\ref{rmk:Q-for-small-gr-grp}.

\section{From bordered Floer to the relative $\QQ$-grading}
\begin{theorem}\label{thm:compute-Q-gr}
  Suppose that $Y$ is a closed $3$-manifold, decomposed along a
  connected surface as $Y=Y_1\cup_F Y_2$. Let
  $\HD=\HD_1\cup_\PMC\HD_2$ be a corresponding decomposition of a
  Heegaard diagram for $Y$. Suppose that $\x,\y\in\Gen(\HD)$ are such
  that $\spinc(\x)$ and $\spinc(\y)$ are torsion, and
  $\spinc(\x)|_{Y_i}=\spinc(\y)|_{Y_i}\eqqcolon \spinc_i$ for $i=1,2$. Write
  $\x=\x_1\otimes\x_2$ and $\y=\y_1\otimes\y_2$, where $\x_i$ and
  $\y_i$ are in $\Gen(\HD_i)$. Then
  \begin{enumerate}
  \item the generators $\gr'_\QQ(\x_1)\times_{\bigGroup_\QQ}\gr'_\QQ(\x_2)$ and
    $\gr'_\QQ(\y_1)\times_{\bigGroup_\QQ}\gr'_\QQ(\y_2)$ lie in the
    same $\QQ$-orbit of
      $\AQBigGrSet(\HD_1,\spinc_1)\times_{\bigGroup_\QQ} \DQBigGrSet(\HD_2,\spinc_2)$
    and
  \item
    $\gr'_\QQ(\y_1)\times_{\bigGroup_\QQ}\gr'_\QQ(\y_2)=\lambda^{\gr_\QQ(\x,\y)}\gr'_\QQ(\x_1)\otimes_{\bigGroup_\QQ}\gr'_\QQ(\x_2)$.
  \end{enumerate}
\end{theorem}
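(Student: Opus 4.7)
The plan is to reduce to a local combinatorial identity about Euler measures and boundary corners. Because $c_1(\spinc(\x))-c_1(\spinc(\y))$ is torsion, the set $\pi_2^\QQ(\x,\y)$ is nonempty (Section~\ref{sec:back:Q-grad}); pick any element $B$ and restrict its multiplicities to $\Sigma_1$ and $\Sigma_2$, obtaining rational bordered domains $B_i\in\pi_2^\QQ(\x_i,\y_i)$ with $B=B_1\cup B_2$. Since $B$ has no boundary on the gluing circle $Z\subset\bdy\Sigma$, automatically $\bdy^\bdy B_1+r_*(\bdy^\bdy B_2)=0$ in $H_1(Z,\mathbf{a};\QQ)$.

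By the $\QQ$-linear extension of the multiplicativity $g'(B*B')=g'(B)g'(B')$ (\cite[Lemma~6.15]{LOT1} for integral domains),
\[
\gr'_\QQ(\y_1) = \gr'_\QQ(\x_1)\cdot g'_\QQ(B_1)
\qquad\text{and}\qquad
\gr'_\QQ(\y_2) = R(g'_\QQ(B_2))\cdot\gr'_\QQ(\x_2).
\]
Sliding $g'_\QQ(B_1)$ across the amalgamated product gives
\[
\gr'_\QQ(\y_1)\times_{\bigGroup_\QQ}\gr'_\QQ(\y_2)
= \bigl(g'_\QQ(B_1)R(g'_\QQ(B_2))\bigr)\cdot\bigl(\gr'_\QQ(\x_1)\times_{\bigGroup_\QQ}\gr'_\QQ(\x_2)\bigr),
\]
where the central element acts via either factor. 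The $H_1(Z,\mathbf{a};\QQ)$-component of $g'_\QQ(B_1)R(g'_\QQ(B_2))$ vanishes by the previous paragraph, so this element is a pure power $\lambda^N\in\bigGroup_\QQ$; this proves part~(1).

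Set $\mu(B):=e(B)+n_\x(B)+n_\y(B)$ and $\mu(B_i):=e(B_i)+n_{\x_i}(B_i)+n_{\y_i}(B_i)$. A direct computation from~\eqref{eq:GpMult} gives
\[
N = -\mu(B_1)-\mu(B_2)+m\bigl(r_*\bdy^\bdy B_2,\bdy(\bdy^\bdy B_1)\bigr),
\]
so it suffices to establish the purely local identity
\[
\mu(B) = \mu(B_1)+\mu(B_2)-m\bigl(r_*\bdy^\bdy B_2,\bdy(\bdy^\bdy B_1)\bigr).
\]
Point measures add trivially since each point of $\x_i$ and $\y_i$ lies in the interior of $\Sigma_i$. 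The discrepancy is therefore entirely in Euler measure: the bordered $e(B_i)$ pick up $90^\circ$ corner contributions at the matched points $\mathbf{a}\subset\bdy\Sigma$ that are absent from the closed diagram, and tallying those contributions produces exactly the bilinear correction. This identity, which underlies the gradings computation in the proof of \cite[Theorem~9.33]{LOT1} for integral bordered domains, extends to rational domains by $\QQ$-linearity in $(B_1,B_2)$; this Euler-measure bookkeeping is the main obstacle, while everything else is formal manipulation in $\bigGroup_\QQ$. Combining the identity with the formula $\gr_\QQ(\y)-\gr_\QQ(\x)=\mu(B)$ from Section~\ref{sec:back:Q-grad} yields $N=-\mu(B)$, proving part~(2) with the sign convention of Theorem~\ref{thm:pairing-graded}.
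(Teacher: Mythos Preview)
Your strategy is the same as the paper's: restrict a rational domain $B\in\pi_2^\QQ(\x,\y)$ to rational bordered domains $B_i$ and use these to compute gradings. But you have turned a one-line computation into a phantom obstacle. You already noted that $r_*(\bdy^\bdy B_2)=-\bdy^\bdy B_1$; substituting, your correction term becomes $m\bigl(-\bdy^\bdy B_1,\bdy(\bdy^\bdy B_1)\bigr)$, and the form $L(\alpha,\beta)=m(\beta,\bdy\alpha)$ is antisymmetric (this is exactly the commutation relation defining $\bigGroup$), so $L(\alpha,\alpha)=0$ and the correction vanishes identically. On the other side, $\mu(B)=\mu(B_1)+\mu(B_2)$ holds on the nose: point measures add because the points of $\x,\y$ lie in the interiors of the $\Sigma_i$, and Euler measure is additive under cutting along $Z$ since the new $90^\circ$ corners created at points of $\mathbf{a}$ are exactly offset by the change in Euler characteristic when a region is split along an arc. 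So there is no ``Euler-measure bookkeeping'' to do; the paper simply chooses $\x_i$ as base generators and writes $g'(B_1)\cdot R(g'(B_2))=(-\mu(B_1)-\mu(B_2),0)=(-\mu(B),0)$ directly.

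There is also a step you pass over too quickly. The identity $\gr'_\QQ(\y_1)=\gr'_\QQ(\x_1)\cdot g'_\QQ(B_1)$ does not follow from multiplicativity alone: by definition $\gr'_\QQ(\y_1)$ is computed via an \emph{integral} domain, so one must check that the rational $B_1$ lands in the same coset of $P'_\QQ(\x_0)$. This is where the hypothesis $\spinc(\x)|_{Y_i}=\spinc(\y)|_{Y_i}$ enters: it guarantees $\pi_2(\x_i,\y_i)\neq\emptyset$, so $B_i$ differs from an integral domain by an element of $\pi_2(\x_i,\x_i)\otimes\QQ$, whose $g'$ lies in $P'_\QQ$. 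The paper makes this step explicit.
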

\begin{proof}
  Since the statements are independent of the base generator used to
  define the grading sets for $\CFAa(\HD_1,\spinc_1)$ and $\CFDa(\HD_2,\spinc_2)$, we
  may choose $\x_i$ to be the base generator for $\HD_i$.

  Since $\spinc(\x)$
  and $\spinc(\y)$ are torsion, it follows
  from~\cite{LipshitzLee08:RelGrad}
  (cf.~Section~\ref{sec:back:Q-grad}) that
  there is a rational domain
  $B\in\pi_2^\QQ(\x,\y)$ connecting $\x$ and $\y$. Intersecting $B$
  with $\HD_1$ and $\HD_2$, we obtain rational domains
  $B_i\in\pi_2^\QQ(\x_i,\y_i)$.

  We argue that the rational domain $B_i$ can be used to compute the
  grading of $\y_i$  (which was originally defined using integral
  domains). 
  Since $\spinc(\x_i)=\spinc(\y_i)$,
  $\pi_2^\QQ(\x_i,\y_i)=\pi_2(\x_i,\y_i)\otimes_\ZZ\QQ$. That is, any
  rational domain $B_i$ connecting $\x_i$ and $\y_i$ can be written as
  \[
  B_i=q_{i,1}C_{i,1}+\cdots+q_{i,\ell}C_{i,\ell}
  \] 
  where the $q_{i,j}\in\QQ$ and the $C_{i,j}\in\pi_2(\x_i,\y_i)$. (To see
  this, note that $\pi_2(\x_i,\y_i)$ is an affine copy of $H_2(Y_i,\bdy
  Y_i;\ZZ)$ while $\pi_2^\QQ(\x_i,\y_i)$ is an affine copy of
  $H_2(Y_i,\bdy Y_i;\QQ)$.)
  Consequently, $B_i$ differs from any integral domain in
  $\pi_2(\x_i,\y_i)$ by a rational periodic domain, and hence has the
  same
  image in  
  $\AQBigGrSet(\HD_1,\spinc_1)$
  or
  $\DQBigGrSet(\HD_2,\spinc_2)$.
  In formulas, as elements of 
  $\AQBigGrSet(\HD_1,\spinc_1)$
  and
  $\DQBigGrSet(\HD_2,\spinc_2)$
  respectively,
  \begin{align*}
    g'(B_1)&=(-e(B_1)-n_{\x_1}(B_1)-n_{\y_1}(B_1),\bdy^\bdy
    B_1)=\gr'(\y_1)\\
    \shortintertext{and}
    R(g'(B_2))&=(-e(B_2)-n_{\x_2}(B_2)-n_{\y_2}(B_2),r_*(\bdy^\bdy
    B_2))=\gr'(\y_2).
  \end{align*}
  Note also that $\bdy^\bdy B_2=-\bdy^\bdy B_1$.

  Thus, with our choice of base generator,
  $\gr'_\QQ(\x_1)\times_{\bigGroup_\QQ}\gr'_\QQ(\x_2)=0$ while
  \begin{align*}
    \gr'_\QQ(\y_1)\times_{\bigGroup_\QQ}\gr'_\QQ(\y_2)&=(-e(B_1)-n_{\x_1}(B_1)-n_{\y_1}(B_1)-e(B_2)-n_{\x_2}(B_2)-n_{\y_2}(B_2),0)\\
    &=(-e(B)-n_\x(B)-n_\y(B),0)=\lambda^{\gr_\QQ(\x,\y)},
  \end{align*}
  as desired.
\end{proof}

To complete the computation of the relative $\QQ$-grading on $\CFa$,
we observe that it is always possible to find a splitting satisfying the
conditions of Theorem~\ref{thm:compute-Q-gr}.
\begin{lemma}\label{lem:decomp-exists}
  Given any $3$-manifold $Y$ and torsion $\SpinC$-structures $\spinc$
  and $\spinc'$ on $Y$ there is a decomposition $Y=Y_1\cup_F Y_2$
  of $Y$ along a connected surface $F$ so that
  $\spinc|_{Y_i}=\spinc'|_{Y_i}$ for $i=1,2$.
\end{lemma}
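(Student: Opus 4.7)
The plan is to take any Heegaard splitting $Y = Y_1 \cup_F Y_2$, where $F$ is a connected closed surface and each $Y_i$ is a handlebody; such splittings exist for every closed orientable $3$-manifold, so this immediately produces a decomposition along a connected surface.

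The key point will be that a handlebody $Y_i$ of genus $g$ deformation retracts onto a wedge of $g$ circles, so $H^k(Y_i;\ZZ) = 0$ for all $k \geq 2$; in particular, $H^2(Y_i;\ZZ) = 0$. Since $\SpinC$-structures on an oriented $3$-manifold form a torsor over $H^2$, this means there is a unique $\SpinC$-structure on each $Y_i$, and hence $\spinc|_{Y_i} = \spinc'|_{Y_i}$ holds automatically for $i = 1, 2$.

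Note that this argument uses neither the torsion hypothesis nor any special feature of $\spinc$ and $\spinc'$: the torsion assumption is inherited from the setting of Theorem~\ref{thm:compute-Q-gr} but plays no role in the construction of the decomposition itself. The main (and essentially only) obstacle is recognizing that a Heegaard splitting suffices; a more naive approach via Mayer--Vietoris or Poincar\'e duality would instead seek a connected surface $F \subset Y$ such that $\mathrm{PD}(\spinc - \spinc') \in H_1(Y;\ZZ)$ lies in the image of $H_1(F;\ZZ) \to H_1(Y;\ZZ)$, but since any Heegaard surface already surjects onto $H_1(Y;\ZZ)$, this reduces to the same choice, and handlebody sides trivialize the question at the level of $\SpinC$-structures.
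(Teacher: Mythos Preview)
Your proof is correct and is essentially the paper's own argument: take any Heegaard splitting and use that a handlebody has a unique $\SpinC$-structure (since $H^2$ vanishes), so the restrictions of $\spinc$ and $\spinc'$ to each side must agree. Your added remarks about the torsion hypothesis being unnecessary and about the Mayer--Vietoris alternative are correct but not needed for the proof.
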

\begin{proof}
  Since a handlebody has a unique $\SpinC$-structure, any Heegaard
  decomposition for $Y$ satisfies the conditions.
\end{proof}

\begin{corollary}\label{cor:bord-gives-gr}
  The $\bigGroup$-set grading $\gr'$ defined in~\cite{LOT1} determines the
  relative $\QQ$-grading on $\HFa$.
\end{corollary}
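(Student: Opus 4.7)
My plan is to read $\gr_\QQ(\x,\y)$ off of the bordered data by combining Lemma~\ref{lem:decomp-exists} and Theorem~\ref{thm:compute-Q-gr}, and then to verify that the $\bigGroup_\QQ$-gradings appearing in the latter are themselves determined by the $\bigGroup$-gradings of~\cite{LOT1}.

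First, for any two generators $\x, \y$ of $\CFa(Y)$ lying in torsion $\SpinC$-structures, Lemma~\ref{lem:decomp-exists} furnishes a splitting $Y = Y_1 \cup_F Y_2$ along a connected surface with $\spinc(\x)|_{Y_i} = \spinc(\y)|_{Y_i}$; any Heegaard splitting does the job, since a handlebody carries a unique $\SpinC$-structure. I would then choose a compatible bordered Heegaard diagram $\HD = \HD_1 \cup_\PMC \HD_2$ and write $\x = \x_1 \otimes \x_2$, $\y = \y_1 \otimes \y_2$. Theorem~\ref{thm:compute-Q-gr} then expresses $\gr_\QQ(\x, \y)$ as the unique exponent of $\lambda$ relating the amalgamated classes $\gr'_\QQ(\x_1) \times_{\bigGroup_\QQ} \gr'_\QQ(\x_2)$ and $\gr'_\QQ(\y_1) \times_{\bigGroup_\QQ} \gr'_\QQ(\y_2)$ in $\AQBigGrSet(\HD_1, \spinc_1) \times_{\bigGroup_\QQ} \DQBigGrSet(\HD_2, \spinc_2)$.

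All that remains is to observe that the $\bigGroup_\QQ$-grading on each bordered module is determined by its $\bigGroup$-grading. The group inclusion $\bigGroup(\PMC) \hookrightarrow \bigGroup_\QQ(\PMC)$ sends the periodic subgroup $P'(\x_0)$ into $P'_\QQ(\x_0)$ (by definition $P'_\QQ(\x_0)$ is its $\QQ$-span), so it descends to natural maps of coset spaces $\ABigGrSet(\HD_1, \spinc_1) \to \AQBigGrSet(\HD_1, \spinc_1)$ and $\DBigGrSet(\HD_2, \spinc_2) \to \DQBigGrSet(\HD_2, \spinc_2)$, and hence a map on the amalgamated products. Because the grading of a generator is defined by the very same formula~(\ref{eq:g-of-B}) in either setting, these maps carry $\gr'(\x_i)$ to $\gr'_\QQ(\x_i)$. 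Consequently, the $\lambda$-exponent supplied by Theorem~\ref{thm:compute-Q-gr}, and therefore $\gr_\QQ(\x, \y)$, is computable from the $\bigGroup$-graded bordered invariants, which is exactly the content of the corollary.

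I do not anticipate any serious obstacle here. The one genuinely nontrivial ingredient — that rational periodic domains do not shift the $\bigGroup_\QQ$-coset of a generator — has already been established inside the proof of Theorem~\ref{thm:compute-Q-gr}, and the inclusion $\bigGroup \hookrightarrow \bigGroup_\QQ$ is manifestly compatible with the amalgamated-product construction since $\lambda$ is already a central element of $\bigGroup$. The corollary is thus essentially a packaging of Theorem~\ref{thm:compute-Q-gr} and Lemma~\ref{lem:decomp-exists} together with this functoriality remark.
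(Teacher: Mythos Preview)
Your proposal is correct and follows essentially the same route as the paper's proof: invoke Lemma~\ref{lem:decomp-exists} to obtain a suitable splitting, apply Theorem~\ref{thm:compute-Q-gr} to read off $\gr_\QQ(\x,\y)$ from the $\bigGroup_\QQ$-gradings, and note that $\gr'_\QQ$ is determined by $\gr'$ via the inclusion $\bigGroup\hookrightarrow\bigGroup_\QQ$. The paper compresses this into a single sentence (``by definition, the grading $\gr'$ determines $\gr'_\QQ$\dots''), whereas you spell out the induced maps on coset spaces and amalgamated products, but the content is the same.
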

\begin{proof}
  By definition, the grading $\gr'$ determines $\gr'_\QQ$ which in turn, by
  Lemma~\ref{lem:decomp-exists} and Theorem~\ref{thm:compute-Q-gr},
  determines the relative $\QQ$ grading.
\end{proof}

\begin{remark}\label{rmk:Q-for-small-gr-grp}
  It is sometimes convenient to work with the smaller grading group
  $\smallGroup$ from \cite{LOT1}, rather than $\bigGroup$. To obtain a
  $\smallGroup$-set grading on $\CFDa$ and $\CFAa$, one conjugates by
  grading refinement data; see~\cite[Section 3.1.1]{LOT2}. In the
  proof of Theorem~\ref{thm:compute-Q-gr}, since one works with the
  same grading refinement data on the two sides, it cancels out in the
  computation. Thus, Theorem~\ref{thm:compute-Q-gr} holds with respect
  to the small grading group, as well.
\end{remark}

\begin{remark}
  In~\cite{LOT4}, we give an algorithm for computing $\HFa(Y)$ by
  taking a Heegaard decomposition of $Y$ and factoring the gluing map
  into arc-slides. For such a decomposition, the hypotheses of
  Theorem~\ref{thm:compute-Q-gr} are automatically satisfied. Thus,
  keeping track of the $\bigGroup_\QQ$-gradings along the
  way,~\cite{LOT4} automatically computes the relative $\QQ$-grading
  on $\HFa(Y)$.
\end{remark}

\begin{remark}
  Instead of defining a $\bigGroup_\QQ$-grading on $\CFDa$ by
  (roughly) tensoring $\bigGroup$-grading with $\QQ$ as above, we
  could instead use rational domains to induce a
  $\bigGroup$-grading. The resulting relative grading agrees with the
  one above when the one above is defined, but it is defined more often.
  Theorem~\ref{thm:compute-Q-gr} then no longer needs the hypothesis
  that $\spinc(\x)|_{Y_i}=\spinc(\y)|_{Y_i}$. The drawback is that,
  for this definition, $\gr'_\QQ$ is no longer induced from
  $\gr'$, so one would not obtain Corollary~\ref{cor:bord-gives-gr}.
\end{remark}
 
\section{Examples}\label{sec:examples}

We give an application of Theorem~\ref{thm:compute-Q-gr} to computing the
$\QQ$-graded Heegaard Floer homology groups of surgeries on some knots
in $S^3$. Our knots are rather simple (the unknot and the trefoil),
and hence the graded Heegaard Floer homology groups on their surgeries
have been known for some time; but these computations do give a nice illustration of
the theorem.


To start, let $Y$ denote the $(-2)$-framed complement of the left-handed trefoil
$T$. By \cite[Theorem 11.7]{LOT1}, $\CFDa(Y)$ is given by
\[
\begin{tikzpicture}
  \node at (0,0) (x3) {$x_3$};
  \node at (0,-1) (y2) {$y_2$};
  \node at (0,-2) (x2) {$x_2$};
  \node at (2,-2) (y1) {$y_1$};
  \node at (4,-2) (x1) {$x_1.$};
  \draw[->] (x3) to node[left]{$\rho_1$} (y2);
  \draw[->] (x2) to node[left]{$\rho_{123}$} (y2);
  \draw[->] (y1) to node[below]{$\rho_2$} (x2);
  \draw[->] (x1) to node[below]{$\rho_3$} (y1);
  \draw[->] (x1) to node[above, right]{$~~\rho_{12}$} (x3);
\end{tikzpicture}
\]
If we take $x_3$ as the base generator then the gradings lie in
$\bigGroup/\langle(-3/2;-1,1,2)\rangle$, and are given by:
\begin{align*}
\gr(x_1) &= (1;0,2,2)/ \langle (-3/2;-1,1,2) \rangle\\
\gr(x_2) &= (1/2;0,1,1)/\langle (-3/2;-1,1,2) \rangle \\
\gr(x_3) &= (0;0,0,0)/\langle (-3/2;-1,1,2) \rangle \\
\gr(y_1) &= (3/2;0,2,1)/\langle (-3/2;-1,1,2) \rangle \\
\gr(y_2) &= (-1/2;-1,0,0)/\langle (-3/2;-1,1,2) \rangle
\end{align*}
(compare~\cite[Section 10.9]{LOT1}).


Let $\HD_0$ denote the $\infty$-framed solid torus. Then
$\CFAa(\HD_0)$ has one generator $n$ with $m_3(n,\rho_2,\rho_1)=n$. In
particular,
\[
\gr(n)=\gr(n)\gr(\rho_2)\gr(\rho_1)\lambda=\gr(n)
(-1/2;0,1,0)(-1/2;1,0,0)=\gr(n) (-1/2;1,1,0).
\]
So, $\gr(n)$ lies in $\langle (-1/2;1,1,0)\rangle \backslash \bigGroup$.

Tensoring the two together, we find that $\CFAa(\HD_0)\DT\CFDa(Y)$ is
generated by $n\otimes y_1$ and $n\otimes y_2$, with no
differential. It follows at once that $\HFa(S^3_{-2}(T))\cong
\Field\oplus \Field$, i.e. $S^3_{-2}(T)$ has the same (ungraded)
Heegaard Floer homology as a lens space; this was, of course, known
before~\cite{OS05:surgeries}.

So far, we have found that the ungraded Heegaard Floer homology of
$-2$ surgery on the trefoil and the unknot are the same. They are,
however, distinguished by their relative $\QQ$-gradings, which we can recover
from the bordered invariants, as follows.

The computation above gives
\begin{align*}
  \gr(n\otimes y_1)&=\langle (-1/2;1,1,0)\rangle\backslash (3/2;0,2,1)/\langle (-3/2;-1,1,2)\rangle\\
  \gr(n\otimes y_2)&=\langle (-1/2;1,1,0)\rangle\backslash(-1/2;-1,0,0)/\langle (-3/2;-1,1,2)\rangle.
\end{align*}
Working in $\bigGroup_\QQ$, we can rewrite the first of these equations as:
\begin{align*}
  \gr(n\otimes y_1)&=\langle (-1/2;1,1,0)\rangle\backslash
  (3/4;-3/2,-3/2,0)\cdot(3/2;0,2,1)\\
  &\qquad\qquad\cdot (3/4;1/2,-1/2,-1)/\langle
  (-3/2;-1,1,2)\rangle\\
  &=\langle (-1/2;1,1,0)\rangle\backslash (1;-1,0,0)/\langle
  (-3/2;-1,1,2)\rangle.
\end{align*}
Consequently, the grading difference between $n\otimes y_1$ and
$n\otimes y_2$ is $3/2$. 

By contrast, the invariant of the $-2$-framed unknot has three
generators:
\[
\begin{tikzpicture}
  \node at (0,0) (b2) {$b_1$};
  \node at (0,-1) (a) {$a$};
  \node at (2,-1) (b1) {$b_2.$};
  \draw[->] (a) to node[below]{$\rho_3$} (b1);
  \draw[->] (a) to node[left]{$\rho_2$} (b2);
  \draw[->] (b1) to node[above]{$~~\rho_{23}$} (b2);
\end{tikzpicture}
\]
If we take $a$ as the base generator, the gradings lie in
$\bigGroup/\langle(1/2;-1,1,2)\rangle$, and are given by:
\begin{align*}
  \gr'(a)&=(0;0,0)/\langle(1/2;-1,1,2)\rangle\\
  \gr'(b_1)&=(-1/2;0,1,0)/\langle(1/2;-1,1,2)\rangle\\
  \gr'(b_2)&=(-1/2;0,0,-1)/ \langle(1/2;-1,1,2)\rangle.
\end{align*}
This gives
\begin{align*}
  \gr'(n\otimes b_1)&=\langle (-1/2;1,1,0)\rangle\backslash
  (-1/2;0,1,0)/\langle(1/2;-1,1,2)\rangle\\
  \gr'(n\otimes b_2)&=\langle (-1/2;1,1,0)\rangle\backslash (-1/2;0,0,-1) / \langle(1/2;-1,1,2)\rangle.
\end{align*}
Working in $\bigGroup_\QQ$, we can rewrite the second of these
equations as:
\[
\gr'(n\otimes b_1)=\langle (-1/2;1,1,0)\rangle\backslash (0;0,0,-1)/ \langle(1/2;-1,1,2)\rangle.
\]
Consequently, the grading difference between $n\otimes b_1$ and
$n\otimes b_2$ is $-1/2$. Thus we see that the relative $\QQ$-grading
distinguishes the Heegaard Floer homology of $-2$ surgery on the
trefoil from $-2$ surgery on the unknot.

\bibliographystyle{../hamsalpha} 
\bibliography{heegaardfloer}
\end{document}